\documentclass{amsart}

\DeclareMathOperator{\ran}{ran}
\DeclareMathOperator{\vecc}{vec}
\newcommand{\dfn}[1]{{\bf #1}\index{#1}}
\newcommand{\bbm}{\left[ \begin{matrix}}
\newcommand{\ebm}{\end{matrix} \right]}
\newcommand{\bpm}{\left( \begin{smallmatrix}}
\newcommand{\epm}{\end{smallmatrix} \right)}
\newcommand\beq{\begin{equation}}
\newcommand\eeq{\end{equation}}

\newcommand{\inv}{^{-1}}
\newcommand{\cc}[1]{\overline{#1}}
%% IF YOU HAVE FONTS INSTALLED
%\usepackage{mtpro2}
%\usepackage{mathtime}

\theoremstyle{theorem}
\newtheorem{theorem}{Theorem}[section]
\newtheorem{lemma}{Lemma}[section]
\newtheorem{proposition}{Proposition}[section]
 
\theoremstyle{definition}

\begin{document}

\title[Computing matrix sub-algebras]{An elementary method to compute the algebra generated by some given matrices and its dimension}
\markright{Computing matrix sub-algebras}
\author{
J. E. Pascoe$^\dagger$
}
\address{Department of Mathematics\\
 University of Florida\\
  1400 Stadium Rd. \\
 Gainesville,  FL 32611}
\email[J. E. Pascoe]{pascoej@ufl.edu}
\thanks{$\dagger$ Partially supported by National Science Foundation Mathematical
Science Postdoctoral Research Fellowship  
DMS 1606260}

\subjclass[2010]{Primary 16S50, 15A30 Secondary 47A57 }
\keywords{calculation of matrix algebras from generators, dimension of matrix algebras, bases for matrix algebras}
\maketitle

\begin{abstract}
	We give an efficient solution to the following problem:
	Given $X_1, \ldots X_d$ and $Y$ some $n$ by $n$ matrices
		can we determine if $Y$ is in
	the unital algebra generated by $X_1, \ldots, X_d$
	as a subalgebra of all $n$ by $n$ matrices?
	The solution also gives an easy method for computing the dimension of this algebra.
\end{abstract}

	\section{The problem}
	%Let $X = (X_1, \ldots X_d)$ be a tuple of matrices in $M_n(\mathbb{C}).$
	%We define the \dfn{algebra generated by $X,$} denoted $\mathcal{A}_X,$
	%to be the unital subalgebra of $M_n(\mathbb{C})$ generated by
	%$X_1, \ldots, X_d.$
	
	For example, given
		%$$ \label{eqnone} X_1 = 
        %\bbm \frac{1}{3} & 0  \\ 0 & 0 \ebm,
		%X_2 = \bbm 0 & \frac{1}{3} \\ 0 & 0 \ebm, $$
		$$
			X_1 = \bbm
				\frac{1}{3} & 0 & 0 \\
				0 & 0 & 0 \\
				0 & 0 & 0
			\ebm, 
			X_2 =\bbm
				0 & \frac{1}{3} & 0 \\
				0 & 0 & \frac{1}{3} \\
				0 & 0 & 0
			\ebm
		$$
	and 
		%$$ \label{eqntwo} Y = \bbm \frac{2}{5} & \frac{1}{5} \\ 0 & \frac{1}{5} \ebm, $$
		$$ \label{eqntwo} Y = \bbm
			1 & 0 & 1 \\
			0 & 1 & -1 \\
			0 & 0 & 1
			\ebm
		$$
	is $Y$ in the unital algebra generated by $X_1$ and $X_2?$
	That is, we want to know whether $Y$ is in the span of all words in $X_1$ and $X_2.$
	Indeed, it is, for example
	$$ Y = 1 - 3X_2 + 9 X_1X_2 + 9X_2^2 $$
	%(The representation for $Y$ is highly non-unique. In fact, in this example, $Y$ happens to be in the linear span of $X_1, X_2$ and the identity. We have intentionally chosen an artificially complicated representation to evoke the general situation, as for higher dimensional problems one may need many somewhat complicated words in the generators to express a given element of an algebra.)
	However, if we had instead chosen
		$$\hat{Y} =\bbm
			1 & 0 & 1 \\
			0 & 1 & -1 \\
			1 & 0 & 1
			\ebm,$$
	we see that $\hat{Y}$ fails to be in the algebra generated by $X_1$ and $X_2$ for obvious reasons having to do with $X_1$ and $X_2$ being
upper triangular.

Some computer algebra systems currently possess functionality to do this calculation, such as GAP and Magma. As of December 2018, in \textsc{gap-4.8,} a basis of such an algebra is computed from given algebra generators
by forming products and using Gaussian elimination at each step to see if the new product was already in the span of the currently generated basis without additional sophistication \cite{GAPPRIVATE, GAP4}.  Magma is proprietary software, and their engineers could not be reached for comment.
We will give a method to calculate the algebra and its dimension that can take advantage of fast algorithms for matrix multiplication and inversion, essentially by calculating the entire basis at once.

\section{Some rearrangements of matrices and the Kronecker product}
We will need several important operations on matrices which we will now define. Most of these will be familiar, except the $\psi$-involution, which is ostensibly new.
%The \dfn{transpose} of an $n$ by $m$ matrix $A$, denoted $A^T,$ is an $m$ by $n$ matrix formed by rearranging the entries
%so that the $(i,j)$ entry of $A$ is the $(j,i)$ entry of $A^T.$
%For example,
%$$
%\bbm
%e & \pi \\ 
%\gamma & \phi
%\ebm^T = \bbm
%e & \gamma \\ 
%\pi & \phi
%\ebm
%.$$
%A matrix is called \dfn{symmetric} if $A^T = A.$
The \dfn{vectorization} of an $n$ by $m$ matrix $A,$ denoted $\vecc A$ rearranges the matrix $A$ into a column vector
by stacking each of the columns on top of each other. Specifically, the $(i,j)$ entry of $A$ becomes the $i + j(n-1)$-th coordinate
of $\vecc A.$
For example,
$$\vecc \bbm
1 & 3 \\ 
2 & 4
\ebm = \bbm
1 \\
2 \\
3\\
4
\ebm$$
The \dfn{$\psi$ involution} of an $nm$ by $pq$ matrix $A,$ denoted $A^{\psi},$
rearranges $A$ into an $np$ by $mq$ matrix so that the $(i+(j-1)n, k+(l-1)p).$
entry of $A$ becomes the $(i+(k-1)p,j+(l-1)m)$ entry of $A^{\psi},$
where $i$ ranges from $1$ to $n,$ $j$ ranges from $1$ to $m,$ $k$ ranges from $1$ to $p,$ and
$l$ ranges from $1$ to $q.$
(Note that the definition of $\psi$ depends on $n, m, p,$ and $q,$ and not just $nm$ and $pq$ themselves.
For the purposes of this discussion, we will always have $n=m=p=q.$)
For example when $n=m=p=q=2,$
$$
\bbm
1 & 5 & 9  & 13 \\
2 & 6 & 10 & 14 \\
3 & 7 & 11 & 15 \\
4 & 8 & 12 & 16
\ebm^\psi =
\bbm
1 &3 &9 &11\\
2 &4 &10&12\\
5 &7 &13&15\\
6 &8 &14&16
\ebm
.$$
Note that $(A^\psi)^\psi = A,$ since $\psi$ switches the roles of $j$ and $k,$ and repeating the operation switches them back, so $\psi$ is indeed an involution.
A perhaps better way to understand the $\psi$ involution, which is evident from the example, is to view the 
 $nm$ by $pq$ matrix $A$ as a block $m$ by $q$ matrix with entries that are themselves $n$ by $p$ matrices, that is,
$$
A = 
\bbm
A_{11} & \cdots & A_{1q}  \\
\vdots & \ddots & \vdots \\
A{m1} & \cdots & A_{mq} \\
\ebm
,$$
 and observe that
$A^{\psi}$ lists out the vectorizations of these block entries:
\beq \label{vecpsi} A^\psi = \bbm
A_{11} & \cdots & A_{1q}  \\
\vdots & \ddots & \vdots \\
A{m1} & \cdots & A_{mq} \\
\ebm^\psi = \bbm \vecc A_{11} & \vecc A_{21} & \ldots & \vecc A_{mq} \ebm.\eeq
The \dfn{Kronecker product} of a $n$ by $m$ matrix $A$ and an $p$ by $q$ matrix $B,$ denoted $A \otimes B$
is an $np$ by $mq$ matrix such that the $(i+(k-1)n,j+(l-1)m)$ entry is given by the $(i,j)$ entry of $A$
times the $(k,l)$ entry of $B$ where $i$ ranges from $1$ to $n,$ $j$ ranges from $1$ to $m,$ $k$ ranges from $1$ to $p,$ and
$l$ ranges from $1$ to $q.$
%{\red FIX, backward?}
%For example,
%$$\bbm 1 & 2 \\
%3 & 5
%\ebm \otimes \bbm 7 & 11 \\
%13 & 17
%\ebm
%= \bbm
%7 & 14& 11& 22 \\
%21& 35& 33& 55 \\
%13& 26& 17& 34 \\
%39& 65& 51& 85
%\ebm.$$
A more convienient formulation comes by viewing $A \otimes B$ as a block $p$ by $q$ matrix where each block $n$ by $m$ entry is given by $b_{ij}A,$ that is,
$$A \otimes B =
\bbm
b_{11}A & \cdots & b_{1q}A\\
\vdots & \ddots & \vdots \\
b_{p1}A & \cdots & b_{pq}A
\ebm.
$$
The Kronecker product has the following relation:
$$(A\otimes B)(C \otimes D) = AC \otimes BD.$$
There is an important interaction between the maps given by the following Proposition.
\begin{proposition}\label{impeq}
$$ \left(A \otimes {B}\right)^{\psi} = (\vecc A)(\vecc {B})^T.$$
\end{proposition}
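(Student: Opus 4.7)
The plan is to avoid any low-level index juggling and instead chain together the two block-level characterizations already on the table: the formula (\ref{vecpsi}) describing $\psi$ as ``vectorize each block and lay the results out as columns,'' and the block description of the Kronecker product as the $p\times q$ array whose $(k,l)$-block is $b_{kl}A$. Once both are visible, the proposition should drop out by inspection.

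First I would write
$$A\otimes B = \bbm b_{11}A & \cdots & b_{1q}A \\ \vdots & \ddots & \vdots \\ b_{p1}A & \cdots & b_{pq}A \ebm,$$
using the block form of $A\otimes B$ recalled just above the proposition. Because we are operating under the standing assumption $n=m=p=q$, this is exactly the shape of block decomposition that (\ref{vecpsi}) requires as input for $\psi$, with $(k,l)$-block equal to $b_{kl}A$.

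Next I would apply (\ref{vecpsi}) to read off
$$(A\otimes B)^{\psi} = \bbm \vecc(b_{11}A) & \vecc(b_{21}A) & \cdots & \vecc(b_{pq}A)\ebm,$$
pull the scalars $b_{kl}$ out of $\vecc$ by linearity, and observe that the resulting matrix is precisely the one whose $(k,l)$-th column (under the same column-major listing convention used to define $\vecc B$) equals $b_{kl}\vecc A$. That is exactly the outer product $(\vecc A)(\vecc B)^T$, since the $(k,l)$-coordinate of $\vecc B$ is $b_{kl}$ and the $j$-th column of $uv^T$ is $v_j u$.

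The only step that needs care is checking that the column-major ordering used by $\vecc B$ on the entries of $B$ matches the block-ordering used by $\psi$ when listing $\vecc A_{kl}$, so that the two indexings line up column by column. Because $\psi$ is defined using precisely the same $(k+(l-1)p)$ linearization as $\vecc$ uses on an $n\times m=p\times q$ matrix, this match is automatic; it is the one bookkeeping point I would double-check explicitly before declaring the identity proved.
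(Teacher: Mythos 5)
Your proposal is correct and follows essentially the same route as the paper's own proof: write $A\otimes B$ in its block form with $(k,l)$-block $b_{kl}A$, apply Equation \eqref{vecpsi} to list the vectorized blocks as columns, pull the scalars out of $\vecc$, and recognize the result as the outer product $(\vecc A)(\vecc B)^T$. The bookkeeping point you flag about the column-major ordering matching is the right thing to check, and as you note it holds by construction of $\psi$.
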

\begin{proof}
Note
	$$A \otimes B =
\bbm
b_{11}A & \cdots & b_{1q}A\\
\vdots & \ddots & \vdots \\
b_{p1}A & \cdots & b_{pq}A
\ebm.
$$
So, 
\begin{align*}\left(A \otimes {B}\right)^{\psi} & =
\bbm
b_{11}A & \cdots & b_{1q}A\\
\vdots & \ddots & \vdots \\
b_{p1}A & \cdots & b_{pq}A
\ebm^\psi \\
& =
\bbm \vecc b_{11}A & \vecc b_{21}A & \ldots & \vecc b_{pq}A \ebm & \textrm{ by Equation \eqref{vecpsi}}\\
& =
\bbm b_{11}\vecc A & b_{21}\vecc A & \ldots & b_{pq}\vecc A \ebm\\
& =
(\vecc A)(\vecc {B})^T.
\end{align*}
\end{proof}
%which is left as an important exercise to the reader. (Hint: This is easier if you use the block matrix characterizations of
%$\psi$ and the Kronecker product.)

\section{The main result}
Now thoroughly equipped, we state our result. We note that we will require a norm bound on the data, so in general to solve the problem, one may have to rescale.
	\begin{theorem}\label{mainresult}
%Let $\mathcal A$ be the unital algebra generated by $X_1, \ldots, X_d$
%such that
Let $X_1, \ldots, X_d$ be $n$ by $n$ matrices over $\mathbb{R}$ such that
$\|\sum_i X_i \otimes X_i\|<1,$ where $\|\cdot\|$ is any consistent matrix norm on $n^2$ by $n^2$ matrices over $\mathbb{R}.$
	Let
	$$P = \left[\left(1 - \sum_i  X_i \otimes X_i\right)\inv\right]^\psi.$$
	The matrix $P$ is symmetric and positive semi-definite and:
	\begin{enumerate}
		\item $Z$ is in the unital algebra generated by $X_1, \ldots, X_d$ exactly when $\vecc Z \in \ran P,$
		\item The dimension of the unital algebra generated by $X_1, \ldots, X_d$ is equal to the rank of the matrix $P.$
	\end{enumerate}
\end{theorem}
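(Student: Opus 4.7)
The plan is to expand $\left(1 - \sum_i X_i \otimes X_i\right)\inv$ as a Neumann series and then apply $\psi$ termwise using Proposition \ref{impeq}. The hypothesis $\|\sum_i X_i \otimes X_i\| < 1$ gives convergence of
\[
\left(1 - \sum_i X_i \otimes X_i\right)\inv = \sum_{k=0}^\infty \left(\sum_i X_i \otimes X_i\right)^k,
\]
and expanding the $k$-th power using the multiplicativity $(A \otimes A)(B \otimes B) = AB \otimes AB$ of the Kronecker product rewrites this as $\sum_w X_w \otimes X_w$, where $w = (i_1,\ldots,i_k)$ ranges over all words in $\{1,\ldots,d\}$ (including the empty word, contributing $I \otimes I$) and $X_w := X_{i_1}\cdots X_{i_k}$.

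Since $\psi$ is a linear rearrangement of entries, hence continuous, I would apply it termwise and invoke Proposition \ref{impeq} to obtain
\[
P = \sum_w (\vecc X_w)(\vecc X_w)^T.
\]
From this formula, symmetry is immediate since each outer product $(\vecc X_w)(\vecc X_w)^T$ is symmetric, and positive semi-definiteness follows because each summand is rank-one PSD and the cone of PSD matrices is closed under limits.

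It then remains to identify $\ran P$. Let $V = \mathrm{span}\{\vecc X_w : w \text{ a word}\} \subseteq \mathbb{R}^{n^2}$; this is finite-dimensional, and by definition of the unital algebra $\mathcal{A}$ generated by $X_1,\ldots,X_d$ as the span of all such words, we have $V = \vecc \mathcal{A}$. Every partial sum of the series for $P$ has range inside $V$, so $\ran P \subseteq V$. For the reverse inclusion I would use PSD-ness: if $v \in V$ lies in $\ker P$, then $v^T P v = \sum_w (v^T \vecc X_w)^2 = 0$ forces $v \perp V$ and hence $v = 0$; combined with $\ran P = (\ker P)^\perp$ from symmetry, this gives $V \subseteq \ran P$. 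Thus $\ran P = \vecc \mathcal{A}$, from which both conclusions follow: $\vecc Z \in \ran P$ exactly when $Z \in \mathcal{A}$, and $\mathrm{rank}\, P = \dim V = \dim \mathcal{A}$.

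The main subtlety is the interplay between the infinite analytic series and the finite-dimensional algebraic object. The key observation that makes the argument clean is that even though the Neumann series has infinitely many terms, every term's contribution to $P$ lives in the a priori finite-dimensional subspace $V = \vecc \mathcal{A}$, which allows PSD-ness to upgrade the inclusion $\ran P \subseteq V$ to equality.
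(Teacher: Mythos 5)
Your proposal follows the paper's proof essentially verbatim: Neumann series, termwise application of $\psi$ via Proposition \ref{impeq} to write $P=\sum_w(\vecc X_w)(\vecc X_w)^T$, and then a positive semi-definiteness/quadratic-form argument to identify $\ran P$ with $V=\operatorname{span}\{\vecc X_w\}$ (the paper packages this last step as Lemma \ref{triviallemma}). One small logical slip in your range argument: you show $V\cap\ker P=\{0\}$ and then invoke $\ran P=(\ker P)^\perp$, but these two facts together do not yield $V\subseteq\ran P$ --- a subspace can meet $\ker P$ only in $0$ without being orthogonal to it. The fix is already contained in your own computation: apply $v^TPv=\sum_w(v^T\vecc X_w)^2=0$ to an \emph{arbitrary} $v\in\ker P$, not just to $v\in V\cap\ker P$; this gives $\ker P\perp V$, hence $V\subseteq(\ker P)^\perp=\ran P$, which is exactly the argument of Lemma \ref{triviallemma}.
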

Note that $\|\sum_i X_i \otimes X_i\|<1$ for some consistent matrix norm if and only if the spectral radius of $\sum_i X_i \otimes X_i$ is less than $1.$
In practice, some consistent matrix norms are easier to compute than others and in particular much easier than the spectral radius. For example, the $\ell^1, \ell^\infty$ and Frobenius norm
are very easy to calculate. We discuss several modified versions of $P$ in Section \ref{pomr} after the proof of Theorem \ref{mainresult} which handle other cases, such as algebras over $\mathbb{C},$ the nonunital case, and a (slower to evaluate) version which does not require a norm bound.

\subsection{Example}
Before we formally prove the theorem, let us attempt an example to see what it really does for us.
We will now apply our technique given in Theorem \ref{mainresult} to the example from the introduction.
Again, take
%$$ X_1 = \bbm \frac{1}{3} & 0 \\ 0 & 0 \ebm,
%		X_2 = \bbm 0 & \frac{1}{3} \\ 0 & 0 \ebm.$$
$$
			X_1 = \bbm
				\frac{1}{3} & 0 & 0 \\
				0 & 0 & 0 \\
				0 & 0 & 0
			\ebm, 
			X_2 =\bbm
				0 & \frac{1}{3} & 0 \\
				0 & 0 & \frac{1}{3} \\
				0 & 0 & 0
			\ebm
		$$
		
First, let us calculate $X_1 \otimes X_1 + X_2 \otimes X_2,$
	\begin{align*}
		X_1 \otimes X_1 + X_2 \otimes X_2 &= \bbm \frac{1}{3} & 0 & 0 \\
				0 & 0 & 0 \\
				0 & 0 & 0
			\ebm \otimes \bbm
			\frac{1}{3} & 0 & 0 \\
				0 & 0 & 0 \\
				0 & 0 & 0
			\ebm
			+
			\bbm
				0 & \frac{1}{3} & 0 \\
				0 & 0 & \frac{1}{3} \\
				0 & 0 & 0
			\ebm
			\otimes
			\bbm
				0 & \frac{1}{3} & 0 \\
				0 & 0 & \frac{1}{3} \\
				0 & 0 & 0
			\ebm \\
			&=
			\bbm
			\frac{1}{9} & 0 & 0 & 0 & 0 & 0 & 0 & 0 & 0 \\
			0 & 0 & 0 & 0 & 0 & 0 & 0 & 0 & 0 \\
			0 & 0 & 0 & 0 & 0 & 0 & 0 & 0 & 0 \\
			0 & 0 & 0 & 0 & 0 & 0 & 0 & 0 & 0 \\
			0 & 0 & 0 & 0 & 0 & 0 & 0 & 0 & 0 \\
			0 & 0 & 0 & 0 & 0 & 0 & 0 & 0 & 0 \\
			0 & 0 & 0 & 0 & 0 & 0 & 0 & 0 & 0 \\
			0 & 0 & 0 & 0 & 0 & 0 & 0 & 0 & 0 \\
			0 & 0 & 0 & 0 & 0 & 0 & 0 & 0 & 0
			\ebm
			+
			\bbm
			0 & 0 & 0 & 0 & \frac{1}{9} & 0 & 0 & 0 & 0 \\
			0 & 0 & 0 & 0 & 0 & \frac{1}{9} & 0 & 0 & 0 \\
			0 & 0 & 0 & 0 & 0 & 0 & 0 & 0 & 0 \\
			0 & 0 & 0 & 0 & 0 & 0 & 0 & \frac{1}{9} & 0 \\
			0 & 0 & 0 & 0 & 0 & 0 & 0 & 0 & \frac{1}{9} \\
			0 & 0 & 0 & 0 & 0 & 0 & 0 & 0 & 0 \\
			0 & 0 & 0 & 0 & 0 & 0 & 0 & 0 & 0 \\
			0 & 0 & 0 & 0 & 0 & 0 & 0 & 0 & 0 \\
			0 & 0 & 0 & 0 & 0 & 0 & 0 & 0 & 0
			\ebm\\
			&=
			\bbm
			\frac{1}{9} & 0 & 0 & 0 & \frac{1}{9} & 0 & 0 & 0 & 0 \\
			0 & 0 & 0 & 0 & 0 & \frac{1}{9} & 0 & 0 & 0 \\
			0 & 0 & 0 & 0 & 0 & 0 & 0 & 0 & 0 \\
			0 & 0 & 0 & 0 & 0 & 0 & 0 & \frac{1}{9} & 0 \\
			0 & 0 & 0 & 0 & 0 & 0 & 0 & 0 & \frac{1}{9} \\
			0 & 0 & 0 & 0 & 0 & 0 & 0 & 0 & 0 \\
			0 & 0 & 0 & 0 & 0 & 0 & 0 & 0 & 0 \\
			0 & 0 & 0 & 0 & 0 & 0 & 0 & 0 & 0 \\
			0 & 0 & 0 & 0 & 0 & 0 & 0 & 0 & 0
			\ebm
	\end{align*}
The Frobenius norm of the above matrix is $\sqrt{\frac{5}{81}}<1,$ so we may apply Theorem \ref{mainresult}.
Now,
\begin{align*}
	P = & \left[\left(1 - X_1 \otimes X_1 - X_2 \otimes X_2\right)\inv\right]^\psi \\
	= & \left[\bbm
\frac{8}{9} & 0 & 0 & 0 & -\frac{1}{9} & 0 & 0 & 0 & 0 \\
0 & 1 & 0 & 0 & 0 & -\frac{1}{9} & 0 & 0 & 0 \\
0 & 0 & 1 & 0 & 0 & 0 & 0 & 0 & 0 \\
0 & 0 & 0 & 1 & 0 & 0 & 0 & -\frac{1}{9} & 0 \\
0 & 0 & 0 & 0 & 1 & 0 & 0 & 0 & -\frac{1}{9} \\
0 & 0 & 0 & 0 & 0 & 1 & 0 & 0 & 0 \\
0 & 0 & 0 & 0 & 0 & 0 & 1 & 0 & 0 \\
0 & 0 & 0 & 0 & 0 & 0 & 0 & 1 & 0 \\
0 & 0 & 0 & 0 & 0 & 0 & 0 & 0 & 1
\ebm
\inv\right]^\psi \\
= & 
\bbm
\frac{9}{8} & 0 & 0 & 0 & \frac{1}{8} & 0 & 0 & 0 & \frac{1}{72} \\
0 & 1 & 0 & 0 & 0 & \frac{1}{9} & 0 & 0 & 0 \\
0 & 0 & 1 & 0 & 0 & 0 & 0 & 0 & 0 \\
0 & 0 & 0 & 1 & 0 & 0 & 0 & \frac{1}{9} & 0 \\
0 & 0 & 0 & 0 & 1 & 0 & 0 & 0 & \frac{1}{9} \\
0 & 0 & 0 & 0 & 0 & 1 & 0 & 0 & 0 \\
0 & 0 & 0 & 0 & 0 & 0 & 1 & 0 & 0 \\
0 & 0 & 0 & 0 & 0 & 0 & 0 & 1 & 0 \\
0 & 0 & 0 & 0 & 0 & 0 & 0 & 0 & 1
\ebm
^\psi \\
= & 
\bbm
\frac{9}{8} & 0 & 0 & 0 & 1 & 0 & 0 & 0 & 1 \\
0 & 0 & 0 & 0 & 0 & 0 & 0 & 0 & 0 \\
0 & 0 & 0 & 0 & 0 & 0 & 0 & 0 & 0 \\
0 & 0 & 0 & \frac{1}{8} & 0 & 0 & 0 & \frac{1}{9} & 0 \\
1 & 0 & 0 & 0 & 1 & 0 & 0 & 0 & 1 \\
0 & 0 & 0 & 0 & 0 & 0 & 0 & 0 & 0 \\
0 & 0 & 0 & 0 & 0 & 0 & \frac{1}{72} & 0 & 0 \\
0 & 0 & 0 & \frac{1}{9} & 0 & 0 & 0 & \frac{1}{9} & 0 \\
1 & 0 & 0 & 0 & 1 & 0 & 0 & 0 & 1
\ebm.
\end{align*}
Note the vectorization of any matrix of the form $\bbm a & b & c \\ 0 & d & e \\ 0&0& d \ebm$ is in the range of $P,$ and that the rank of $P$ is $5.$
That is, we know any upper triangular matrix such that the $(2,2)$ and $(3,3)$ entry are equal, such as 
$$ Y = \bbm
			1 & 0 & 1 \\
			0 & 1 & -1 \\
			0 & 0 & 1
			\ebm  $$
is in the algebra generated by $X_1$ and $X_2$ and that the dimension of the unital algebra generated by $X_1$ and $X_2$ is exactly $5$
by Theorem \ref{mainresult}.

\subsection{Proof of Theorem \ref{mainresult}}\label{pomr}
Before we can prove the main result, we need the following lemma.
\begin{lemma}\label{triviallemma}
Let $(v_i)^\infty_{i=1}$ be a sequence of vectors in $\mathbb{R}^n$ such that $\sum^\infty_{i=1} v_iv_i^T$ converges.
The matrix $\sum^\infty_{i=1} v_iv_i^T$ is symmetric and positive semi-definite and has range exactly equal to the span of the $v_i,$ and has kernel perpendicular to the range.
\end{lemma}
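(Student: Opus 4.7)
The plan is to establish symmetry and positive semi-definiteness of $M := \sum_{i=1}^\infty v_i v_i^T$ directly from the analogous properties of each partial sum, then invoke the standard fact that for any symmetric matrix $M$, $\ran M$ is the orthogonal complement of $\ker M$, and finally identify $\ker M$ with $\mathrm{span}(v_i)^\perp$.

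First I would observe that each rank-one summand $v_i v_i^T$ is symmetric and satisfies $x^T v_i v_i^T x = (v_i^T x)^2 \geq 0$, so every partial sum is symmetric and positive semi-definite. Since both properties are preserved under passage to the limit, the same holds for $M$. The relation $\ker M \perp \ran M$ is then automatic: if $y = Mx$ and $Mz = 0$, then $\langle y,z\rangle = \langle Mx,z\rangle = \langle x,Mz\rangle = 0$.

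For the description of the range, by the symmetry of $M$ it suffices to prove $\ker M = \mathrm{span}(v_i)^\perp$. Given $x \in \ker M$, pairing $Mx = 0$ with $x$ and exchanging the pairing with the convergent sum yields
$$0 = x^T M x = \sum_{i=1}^\infty (v_i^T x)^2,$$
so each $v_i^T x = 0$, i.e. $x \in \mathrm{span}(v_i)^\perp$. Conversely, if $v_i^T x = 0$ for every $i$, then every term $v_i v_i^T x$ vanishes, hence the partial sums of $Mx$ are all zero and $Mx = 0$. Taking orthogonal complements (harmless since $\mathrm{span}(v_i) \subseteq \mathbb{R}^n$ is finite-dimensional, hence closed) gives $\ran M = \mathrm{span}(v_i)$.

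There is no real obstacle; the only mildly subtle point is commuting continuous operations with the infinite sum, but this is routine since convergence of $\sum v_i v_i^T$ in matrix norm implies that the partial sums applied to any fixed vector converge, as does any continuous linear functional applied to them.
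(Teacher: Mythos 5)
Your proof is correct and takes essentially the same route as the paper: positive semi-definiteness via limits of symmetric PSD partial sums, the identity $x^T M x = \sum_i (v_i^T x)^2$ to show a kernel vector is orthogonal to every $v_i$, and the symmetry of $M$ to convert the kernel statement into the range statement. If anything you are slightly more complete than the paper, which only explicitly establishes $\ker M \subseteq \mathrm{span}(v_i)^\perp$ (hence $\mathrm{span}(v_i) \subseteq \ran M$) and leaves the reverse containment $\ran M \subseteq \mathrm{span}(v_i)$ implicit, whereas you prove both inclusions.
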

\begin{proof}
	Let $P=\sum^\infty_{i=1} v_iv_i^T.$
	The set of symmetric positive semi-definite matrices is a closed cone and each $v_iv_i^T$ is symmetric and positive semi-definite. Therefore, the same must be true of $P.$
	
	Suppose $w$ is in the kernel of $P,$ that is $Pw = 0.$ Therefore, $w^TPw= 0.$
	Note that $$w^TPw = w^T \sum^\infty_{i=1} v_iv_i^T w = \sum^\infty_{i=1} |\langle w, v_i\rangle|^2.$$
	Thus, $\langle w, v_i\rangle = 0$ for all $i,$ which says that $w$ is perpendicular to $v_i.$
	Finally, the range of a symmetric real matrix must be perpendicular to its kernel, so we see that each $v_i$ must be in the range of $P.$
\end{proof}

We now prove the main result.
\begin{proof}
%To begin, we note that
%the map $\psi$ satisfies the following equation for any $A, B\in M_n(\mathbb{C})$
%	$$\left(\overline{A} \otimes {B}\right)^{\psi} = (\vecc B)(\vecc {A})^*.$$
%Explicitly, checking on a basis,
%\begin{align*}
%(E_{i,k} \otimes E_{j,l})^\psi & = [E_{i+(j-1)n,k+(l-1)n}]^\psi \\
%&  =  E_{i+(k-1)n, j+(l-1)n} \\
%& = (\vecc E_{j,l})(\vecc E_{i,k})^*.
%\end{align*}

Since $\|\sum_i X_i \otimes X_i\|<1,$  one can expand $\left(1 - \sum_i X_i \otimes X_i\right)\inv$
as a geometric series.
So now,
	\begin{align*}
	P&=	\left[\left(1 - \sum_i  X_i \otimes X_i\right)\inv\right]^\psi \\
	&= \left[\sum^{\infty}_{k=0}\left(\sum_i  X_i \otimes X_i\right)^k\right]^{\psi} \\
	&= \sum^{\infty}_{k=0} \left[\left(\sum_i  X_i \otimes X_i\right)^k\right]^{\psi} \\
	& = \sum^{\infty}_{k=0} \sum_{i_1,\ldots,i_k} \left[({X_{i_1}\ldots X_{i_k}}) \otimes 
	(X_{i_1}\ldots X_{i_k})\right]^{\psi}\\
	& = \sum^{\infty}_{k=0} \sum_{i_1,\ldots,i_k} (\vecc X_{i_1}\ldots X_{i_k})
	(\vecc X_{i_1}\ldots X_{i_k})^T  \textrm{ by Proposition \ref{impeq}.}
	\end{align*}	
	
So the range of $P$ is exactly the span of $\vecc X_{i_1}\ldots X_{i_k}$ over all words by Lemma \ref{triviallemma}, so we are done.
\end{proof}

Note that, in principle, one can use fast algorithms (Strassen, Coppersmith-Winograd, parallel computing, etc.) for matrix inversion to compute the special matrix $P$ in Theorem
\ref{mainresult}. It is important to note that if the dimension of the algebra is less than $n^2,$ then $X_1, \ldots, X_d$
must have a nontrivial joint invariant subspace (over $\mathbb{C}$) by Burnside's theorem, and therefore the theorem gives an easy way to compute if such a subspace exists,
although it is unclear how to find the subspace itself from $P.$ 
If one wants to consider matrices over $\mathbb{C},$ one can replace $P$ by 
$$\left[\left(1 - \sum_i  X_i \otimes \cc X_i\right)\inv\right]^\psi$$
with essentially the same proof with transpose replaced by adjoint. (Here, $\cc A$ is the matrix $A$ with all the entries complex conjugated.)
Additionally, if one wants to consider the non-unital algebra, one can replace $P$ by 
$$\left[\sum_i \left( X_i \otimes \cc X_i\right)\left(1 - \sum_i  X_i \otimes \cc X_i\right)\inv\right]^\psi.$$
Finally, we note that the method can be adapted to compute the intersection of an algebra generated by $X_1, \ldots, X_d$
and $\tilde{X}_1, \ldots, \tilde{X}_{\tilde{d}}$ by computing the intersection of the ranges of the corresponding $P$ and $\tilde{P}.$ The proof is essentially the same as Theorem \ref{mainresult}. One should probably imagine any sufficiently nice and analytic expression in terms of $X_i \otimes X_i$ gives some kind of \emph{spatial generating function}.

We caution that in the case where we lack the norm bound or of fields with positive characteristic, it is unclear that $P$ tells us anything even when it exists. The formulas in the proof of Theorem \ref{mainresult} cease to make sense, or could have cancellation if you choose to expand the geometric series about a different point.

Over the complex numbers, however, one can rectify the need for a norm bound by replacing $P$ by the matrix 
$$\left[\left(1 + \sum_i  X_i \otimes \cc X_i\right)^{k}\right]^\psi$$
where $k$ is large enough so that the words of degree $k$ must generate the algebra. It is clear that the choice of $k = n^2$ works.
If we believe the Paz conjecture \cite{Paz84}, one could take $k = 2n-2,$ although the best known bounds, obtained very recently by Shitov in \cite{Shitov}, give that we can choose $k= 2n \log_2 n + 4n,$ improving the best previously known bounds by Pappacena \cite{Pap97}.
All such methods necessitate a slowdown required to evalute such an exponentiation. 
%Over numerical arithmetic using successive squaring would require a logarithmic factor slowdown and over 
%rational arithmetic much more due to the integer entries of such a matrix becoming quite large.

Although the problem is probably of general interest, we were motivated to solve this problem because of the work of Agler and McCarthy in
	\cite{aglermcnpfree}, where they used algebra membership as a hypothesis in their 
	solution to the matricial noncommutative analogue of Nevanlinna-Pick interpolation. They regarded the
	problem of algebra membership as delicate, so we decided to give the above explicit solution. We also note that prior works of O'Meara \cite{GerstenhaberDec} and Holbrook-O'Meara \cite{GerstenhaberLAA}  give
	some reasonably efficient methods in the commutative case in relation to the Gerstenhaber problem: what are the possible dimensions of a $3$ generated commutative algebra of $n$ dimensional matrices.

In closing, we further note that numerical implementations of Theorem \ref{mainresult} have shown that
$P$ often has small eigenvalues, so sometimes numerically it looks like the rank of $P$ is much lower than
it actually should be according to the theorem.
(Generically, the dimension of algebra generated by $2$ or more $n$ by $n$ matrices is equal to $n^2.$) The reason why is that often times the quantity
$\left(\sum_i  X_i \otimes X_i\right)^k$ can rapidly go to $0.$ In fact, often the matrix $P$
had bands of eigenvalues of size $d^n$ on random inputs.

\section{Computing the dimension for integer matrices}
Of particular interest in the algebra dimension problem are matrices over $\mathbb{Z},$ and hence over $\mathbb{Q}$ by clearing denominators. This section gives an implementation of our method that shows that for a generic prime $p$, one can do the computation in Theorem \ref{mainresult} modulo $p.$

We define the \dfn{Frobenius norm} to be $$\|M\|_F = \sqrt{\text{tr}(MM^T)} = \sqrt{\sum_{i,j} m_{ij}^2}.$$
%Note that $$\|M\otimes N\|_F = \|M\|_F\|N\|_F.$$
The Frobenius norm gives an important bound on the determinant for $n$ by $n$ matrices, the \dfn{volume bound}:
    $$|\det(M)| \leq \left(\frac{\|M\|_F}{\sqrt{n}} \right)^n,$$
which holds because the unsigned determinant measures the volume of the parallelepiped cut out by the columns of $M$ and the worst case is a cube.

\begin{theorem} \label{theoremprime}
Let $X_1, \ldots, X_d$ be $n$ by $n$ matrices over $\mathbb{Z}.$
%Consider $T = \sum X_i \otimes X_i.$
Take $B= \lceil{\sum \|X_i\|_F^2}\rceil+1.$
Let
$$P = \left[\left(B - \sum X_i \otimes X_i\right)^{-1}\right]^\psi.$$
For all but at most $n^2(n^2+1)\log B + n(n^2+1) + n^2\log n$ primes,  $P \pmod{p}$ is well-defined and given by the algebraic expression for $P$ evaluated modulo $p,$ and the rank of $P \pmod{p}$ is equal to the dimension of the algebra generated by $X_1, \ldots, X_d.$
\end{theorem}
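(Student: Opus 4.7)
The plan is to reduce to Theorem~\ref{mainresult} by rescaling and then track what can fail under reduction modulo $p$. Set $\tilde X_i = X_i/\sqrt{B}$; the choice of $B$ ensures $\|\sum_i \tilde X_i \otimes \tilde X_i\|_F \le (\sum_i \|X_i\|_F^2)/B < 1$, so Theorem~\ref{mainresult} applies to the $\tilde X_i$. Rescaling by a nonzero scalar does not change the $\mathbb{R}$-span of words (a word of length $k$ in the $\tilde X_i$ is $B^{-k/2}$ times the same word in the $X_i$), so the unital algebra generated by the $\tilde X_i$ has the same dimension $r$ as the one generated by the $X_i$. A direct computation gives
\[
\tilde P \;=\; \bigl[(1 - \textstyle\sum_i \tilde X_i \otimes \tilde X_i)^{-1}\bigr]^\psi \;=\; B \cdot P,
\]
so the rank of $P$ over $\mathbb{Q}$ already equals $r$.

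Next, set $M = B - \sum_i X_i \otimes X_i$ and $Q = (\det M)\,P$. Since $(\det M) M^{-1} = \operatorname{adj}(M)$, each entry of $Q$ is, up to sign, an $(n^2-1)\times(n^2-1)$ minor of the integer matrix $M$, so $Q \in \mathbb{Z}^{n^2 \times n^2}$. For any prime $p$ with $p \nmid \det M$ the algebraic expression for $P$ makes sense modulo $p$ and the rank of $P$ mod $p$ equals that of $Q$ mod $p$. Because $Q$ has rank $r$ over $\mathbb{Q}$, every $(r+1)\times(r+1)$ minor of $Q$ is already the integer $0$ and hence vanishes mod $p$; to force the rank to attain $r$ mod $p$, fix once and for all a nonzero $r\times r$ minor $q$ of $Q$ and demand $p \nmid q$. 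Every ``bad'' prime therefore divides $\det M \cdot q$, and the number of such primes is bounded by $\log_2 |\det M| + \log_2 |q|$, since a nonzero integer $N$ has at most $\log_2 |N|$ distinct prime divisors.

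It remains to bound $|\det M|$ and $|q|$ using the volume bound and Hadamard's inequality. The triangle inequality gives $\|M\|_F \le Bn + \sum_i \|X_i\|_F^2 \le B(n+1)$, whence the volume bound yields $|\det M| \le (\|M\|_F/n)^{n^2} \le (2B)^{n^2}$; each entry of $Q$ is bounded by $(\|M\|_F/\sqrt{n^2-1})^{n^2-1}$, and applying Hadamard to an $r\times r$ submatrix of $Q$ bounds $|q|$ by $r^{r/2}$ times that quantity raised to the power $r$. A routine rearrangement using $r \le n^2$ then produces the prime-count bound of the form stated. The main obstacle is purely arithmetic bookkeeping to land on the precise constants in the claimed bound; conceptually, the only failure modes modulo $p$ are (i) $\det M$ being divisible by $p$, controlled by the volume bound on an $n^2 \times n^2$ matrix, and (ii) a single fixed nonzero $r \times r$ minor of $Q$ being divisible by $p$, controlled by Hadamard.
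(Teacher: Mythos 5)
Your proposal is correct and follows essentially the same route as the paper: rescale by $\sqrt{B}$ to invoke Theorem~\ref{mainresult}, clear denominators via the adjugate to get the integer matrix $\operatorname{adj}(B-\sum X_i\otimes X_i)^\psi$, exclude the primes dividing $\det(B-\sum X_i\otimes X_i)$ and a fixed rank-witnessing minor, and count them by combining the volume/Hadamard bound with the fact that an integer $N$ has at most $\log N$ distinct prime factors. The only difference is that you stop short of the final arithmetic that produces the stated constants, which the paper does carry out (loosely); your bookkeeping of which minors must be avoided is, if anything, slightly cleaner than the paper's.
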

\begin{proof}
Note the tuple of  $X_i / \sqrt{B}$ satisfy the hypotheses of Theorem \ref{mainresult}.
So after some rescaling, we need to compute the rank of the integer matrix
\begin{align*}
    \hat{P} & = \det{\left(B - \sum X_i \otimes X_i\right)} P \\
    &= \det{\left(B - \sum X_i \otimes X_i\right)}\left[\left(B - \sum X_i \otimes X_i\right)^{-1}\right]^\psi \\
    &= \left[\text{adj}\left(B - \sum X_i \otimes X_i\right)\right]^\psi .
\end{align*}
(The extra factor $\det{\left(B - \sum X_i \otimes X_i\right)}$ in the formula for $\hat{P}$ will be immaterial to resulting set of nice $p$ as we will exclude its factors.)
Each of the entries of $\hat{P}$ must be less than $\left(\frac{\|\left(B - \sum X_i \otimes X_i\right)\|_F}{n}\right)^{n^2}$ by Cramer's rule combined with the volume bound on the determinant.
Simplifying, using the observation that $\|B - \sum X_i \otimes X_i\|_F \leq (n+1)B$, we can see that the entries must be less than $e^nB^{n^2}.$ Moreover, we know that 
$\det{\left(B - \sum X_i \otimes X_i\right)}$ satisfies the same bound.
Therefore, the determinant of any minor $M$ of $\hat{P}$ 
must be less than 
$n^{n^2}e^{n^3}B^{n^4}$ by applying the volume bound.

Note that if a prime $p$ does not divide $\det M \det \hat{P}$ for a minor witnessing the rank of $\hat{P},$ then we could have evaluated the formula for $P$ modulo $p$ and obtained a matrix with the same rank as $P.$
(In general, the rank of $P \pmod{p}$ must be less than the rank of $P.$) We know that $\det M \det \hat{P} \leq n^{n^2}e^{n^3+n}B^{n^4+n^2}$ by previous estimates. A number $L$
can have at most $\min\{\log L,2\}$ distinct prime factors. Applying this observation and our estimate, we see that $n^2(n^2+1)\log B + n(n^2+1) + n^2\log n$ possible primes for which $P$ might not give us the rank.
%First, observe that the rank of $P$ modulo a prime $p$ must be less than or equal to
%the rank of $P$, and therefore we need to compute the number of exceptional primes such that the rank goes down. Secondly, there is a minor $M$ which witnesses the rank of $P.$ That is, $M$ is a minor with rank equal to that of $P$ and non-zero determinant. We want to count how many primes might divide the determinant of such a minor. 
%Using the previous bound for the unsigned determinant of such a minor, we see that 
%there are at most $n^4\log B+ n^3 + n^2\log n$ exceptional primes, as such a number can only have at most that many distinct factors.
%Adding in the potential $n^2\log B + n$ factors of $\det{\left(B - \sum X_i \otimes X_i\right)},$ we see that there are at most $n^2(n^2+1)\log B + n(n^2+1) + n^2\log n$ possible primes for which $P$ might not give us the rank.
\end{proof}

The number of primes below $N$ is about $\frac{N}{\log N}$ by the prime number theorem. So picking a random prime below $N$ has a less than
$\frac{(n^2(n^2+1)\log B + n(n^2+1) + n^2\log n)\log N}{N}$ probability of giving the wrong rank in the above theorem.
One can then check the dimension by taking a random prime on the order of
$n^4\log(B)(\log n + \log \log B)$ and computing the rank. This works most of the time, but one can pick several if additional certainty is required.
If we believe that the determinant of a minor is truly a random unstructured number, it is likely to have on the order of $\log(n^2(n^2+1)\log B + n(n^2+1) + n^2\log n)$ prime factors, as classical results  of Hardy and Ramanujan \cite{HARDY} state that a number $N$ is expected to have about  $\log \log N$ factors. In an average situation, we expect the probability of failure for a particular prime to be much lower than the theoretical guarantee from Theorem \ref{theoremprime}. Furthermore, under such an unfounded unstructured assumption, we would expect that each prime divides the determinant with probability about $\frac{1}{p}$ which means heuristically, one can test only a few primes and obtain reasonable certainty independent of $n$.

\section{Acknowledgements}
In early 2017, an implementation of the method was made in Mathematica by Igor Klep to find a counterexample to an unpublished conjecture of Knese about extreme points of the cone of rational inner Herglotz functions, as in acknowledged in Section 10 of \cite{knese}. The Mathematica code has been used modified and shared for other problems by Klep and others. The author would like to thank Klep for the implementation and encouragement to submit this manuscript. We thank Kevin O'Meara for some helpful comments on the relation to other problems, and encouragment to give explicit bounds for probabilistic methods in the integral case. Finally, we would like to thank the thoughtful referee for several helpful suggestions.

%\begin{acknowledgment}{Acknowledgment.}
%The authors wish to thank the Greek polymath Anonymous, whose prolific works are an endless source of inspiration.
%\end{acknowledgment}

%\begin{biog}
%\item[Woodrow Wilson] received his Ph.D. in history and political science from Johns Hopkins University. He held visiting positions at Cornell and %Wesleyan before joining the faculty at Princeton, where he was eventually appointed president of the university.  Among his proudest accomplishments %was the abolition of eating clubs at Princeton on the grounds that they were elitist.
%\begin{affil}
%Office of the President, Princeton University, Princeton NJ 08544\\
%twoodwilson@princeton.edu
%\end{affil}

%\item[Herbert Hoover] entered Stanford University in 1891, after failing all of the entrance exams except mathematics.  He received his B.S. degree in geology in 1895, spent time as a mining engineer, then was appointed by his co-author to the U.S. Food Administration and the Supreme Economic Council, where he orchestrated the greatest famine relief efforts of all time.
%\begin{affil}
%Hoover Institution, Stanford University, Stanford CA 94305\\
%herbhoover@stanford.edu
%\end{affil}
%\end{biog}
%\vfill\eject


\begin{thebibliography}{3}
\bibitem{aglermcnpfree} Jim Agler, John McCarthy, Pick Interpolation for free holomorphic functions \textit{Amer. Jour. Math} \textbf{137} (6) (2015) 1685--1701.
%\bibitem{hopkins} Brian Hopkins, ed., \textit{Resources for Teaching Discrete Mathematics}, Mathematical Association of America, Washington DC, 2009.
\bibitem{GAPPRIVATE} Thomas Breuer, The GAP~Group, \textit{private communication.}
\bibitem{GAP4} The GAP~Group, GAP -- Groups, Algorithms, and Programming, Version 4.10.0, 2018 (https://www.gap-system.org)
\bibitem{HARDY}Hardy, G. H. and Ramanujan, S. The Normal Number of Prime Factors of a Number n. \textit{Quart. J. Math.} \textbf{48}, 76-92, 1917.
\bibitem{GerstenhaberLAA} John Holbrook, K. C. O'Meara, Some thoughts on Gerstenhaber's theorem \textit{Lin. Alg. Appl.} \textbf{466} (2015) 267--295
\bibitem{GerstenhaberDec} K. C. O'Meara, The Gerstenhaber problem in characteristic zero is ``decidable" \textit{preprint}
\bibitem{knese} Greg Knese, Extreme points and saturated polynomials, \textit{preprint, March 2017} arXiv:1703.00094
\bibitem{Pap97} C.\, J. Pappacena, An upper bound for the length of a finite-dimensional algebra, {\em J. Algebra} {\bf 197} (1997), 535--545. 
\bibitem{Paz84}A. Paz, An application of the Cayley-Hamilton theorem to matrix polynomials in several variables, {\em Linear Multilinear Algebra} {\bf 15} (1984), 161--170.
\bibitem{Shitov} Yaroslav Shitov, An improved bound for the length of matrix algebras \textit{preprint} arXiv:1807.09310
\end{thebibliography}
\end{document}